\newtheorem{thm}{Theorem}
\newtheorem{lem}{Lemma}
\newtheorem{rmk}{Remark}
\newcommand{\qed}{\hfill\mbox{\raggedright $\Box$}\medskip}
\begin{document}

\title{A Complex Primitive $N$th Root Of Unity: A Very Elementary Approach}

\author{Oswaldo Rio Branco de Oliveira}
\date{}
\maketitle

\begin{abstract}

This paper presents a primitive $n$th root of unity in $\mathbb C$. The approach is very elementary and avoids the following: the complex exponential function, trigonometry, and group theory. It also avoids differentiation, integration, and series. The presentation of the primitive is indirect. 
\end{abstract}

\vspace{0,2 cm}

\hspace{- 0,6 cm}{\sl Mathematics Subject Classification: 97F50, 97I80}

\hspace{- 0,6 cm}{\sl Key words and phrases:} Complex Numbers, Complex Analysis.

\vspace{0,3 cm}

\section{Introduction.}

This article presents a primitive $n$th root of unity in the complex plane, for each $n$ in $\mathbb N$, through a very elementary approach that does not depend on the complex exponential function, angles, group theory, differentiation, integration, or series. The presentation of the primitive is indirect. This approach only employs the fundamental theorem of algebra (see the elementary proofs given by de Oliveira \cite[5]{OO1} and Körner \cite{TWK}) and the four basic operations.

 A good reason to avoid the complex exponential function is justified by the fact that the theory related to it is more profound than that of the complex roots of unity (a polynomial result), see Burckel \cite{RB}. In particular, it is interesting to notice that the usual proof of the well-known Euler's Formula, $e^{i\theta}=\cos \theta + i\sin\theta$, for $\theta$ in $\mathbb R$, requires series, differentiation, and the (transcendental) numbers $e$ and $\pi$ (see Rudin ~\cite[pp. 167--169]{RU}). On the other hand, besides using the fundamental theorem of algebra, the usual group theory proofs of the existence of a primitive $n$th root of unity introduce concepts that are unnecessary to reach our objective. In addition, such existence proofs do not point out a primitive $n$th root of unity (see Artin ~\cite[pp. 49--51]{EA} and Lang ~\cite[pp. 177--178, 277--278]{SL}). 

\vspace{0,1 cm}

The study of the complex roots of unity goes back to the eighteenth century, when De Moivre (1667-1754) proved the formula $(\cos\theta + i\sin\theta)^n=\cos n\theta + i \sin n \theta $, for $\theta$ in $\mathbb R$ and $n$ in $\mathbb N$. A classical presentation of these roots, that employs Euler's Formula, can be seen in Cauchy ~\cite[pp. 196--217]{AC}. In ~\cite[p.197]{AP}, Pringsheim (1850-1941) points out how to take $n$th roots of unity, when $n=2^j$ for some $j$ in $\mathbb N$. For more historical informations, we refer the reader to Remmert ~\cite[pp. 94--96]{RR}. Nowadays, roots of unity is an important part of mathematics, especially in  number theory and discrete Fourier transform.

\section{Notations and Preliminaries}
Given $z$ in $\mathbb C$ such that $z=x+iy$, with $x$ in $\mathbb R$ and $y$ in $\mathbb R$, and $i^2=-1$, we write Re$(z)=x$ and Im$(z)=y$. The complex conjugate of $z$ is $\overline{z}= x -iy$. The fundamental theorem of algebra implies the existence of the square root function on $[0,+\infty)$. Thus, we define the absolute value of $z$ as $|z|=\sqrt{z\overline{z}}=\sqrt{x^2 + y^2}$.

\vspace{0,1 cm}

Let us fix $n$ in $\mathbb N=\{1,2,\ldots\}$. By the Fundamental Theorem of Algebra the equation $z^n=1$, with $z$ in $\mathbb C$, has $n$ solutions. Each one of these solutions is called a {\sl $n$th root of unity}. Let us denote by $w$  an arbitrary $n$th root of unity. 

\vspace{0,1 cm}

Given $w$, we have $z^n -1 = z^n -w^n = (z-w)Q(z)$, with $Q(z)=\sum_{j=0}^{n-1}z^{n-1-j}w^j$ a polynomial and $Q(w)=nw^{n-1}\neq 0$, which implies that $w$ is a simple zero of the polynomial $z^n-1$, $z$ in $\mathbb C$. Hence, there are $n$ distinct $n$th roots of unity.

\vspace{0,1 cm}

Given $k$ arbitrary in $\mathbb N$, a short computation reveals that $\overline{w}=w^{-1}$ and $w^k$ are $n$th roots of unity.  In addition, $|w^{k+1} -w^k|= |w^k(w-1)|=|w -1|$.

\vspace{0,1 cm}

If $w$ is either real or pure imaginary, then $w$ is equal to either $1$, $-1$, $i$ or $-i$.

We only assume, without proof, the fundamental theorem of algebra.

\section{A Primitive $n$th Root of Unity}

Let us consider $n$ in $\mathbb N$ and $w$, a $n$th root of unity. We say that $w$ is a {\sl primitive $n$th root of unity} if $w, w^2,\ldots,w^n=1$ are all the $n$ $n$th roots of unity.

\vspace{0,1 cm}

In order to prove the existence of a primitive $n$th root of unity we may assume that $n$ is even. This is true because if $w$ is a primitive $2n$th root of unity, then $w^2,\ldots, w^{2n}$ are all the $n$ distinct solutions of $z^n=1$. Even better, since the cases $n=2$ and $n=4$ are trivial, we may also assume $n\geq 6$.

\vspace{0,1 cm}

Given an even $n\geq 6$, the equation $z^n=1$ has a solution $w=x +iy$, with $x\neq 0$ and $y\neq 0$. Clearly $\pm w$ and $\pm \overline{w}$ are solutions of $z^n=1$ too. Therefore, there exists a $n$th root of unity with (strictly) positive real and imaginary parts.

\vspace{0,1 cm}

As a consequence, 
there exists a $n$th root of unity:
$$\left\{\begin{array}{ll}
\zeta =\zeta(n)=a+ib, \ \textrm{with}\ 0< a<1 \ \textrm{and}\ 0<b<1, \\
\textrm{satisfying}\ 0<|\zeta -1|=r,\ \textrm{with}\ r=\min\{|w -1|: \ w^n=1\ \textrm{and}\ \textrm{Im}(w)>0\}.
\end{array}
\right.$$ 
Let us notice that not only $\zeta$ satisfies $r^2=|\zeta -1|^2= (a-1)^2 +b^2=2-2a$ but $\zeta$ is also the {\sf unique} $n$th root of unity satisfying  $|\zeta-1|=r$ and $\textrm{Im}(\zeta)>0$. 
\vspace{0,1 cm}

From now on, we consider an even $n\geq 6$  and keep the notation above.

\newpage

\begin{lem} {\label L} Given an arbitrary $x$ in $[-1,1]$, we put $z=z_x=x+i\sqrt{1 - x^2}$.
\begin{itemize}
\item[(A)] $\varphi:[-a,1]\to [-1,a]$, with $\varphi(x)= \textrm{Re}(\zeta z)=a x - b\sqrt{1 -x^2}$ for each $x$ in $[-a,1]$, is bijective and strictly increasing. Its inverse $\psi:[-1,a] \to [-a,1]$ is given by $\psi(y)=\textrm{Re}(\zeta^{-1}z_y)= a y + b\sqrt{1-y^2}$, for each $y$ in $[-1,a]$. 

\item[(B)] If $x\in [-1,-a]\cup[a,1]$, then we have $z^n=1$ if and only if $x \in \{\pm 1,\pm a\}$.
\end{itemize}
\end{lem}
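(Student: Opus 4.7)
The guiding idea for (A) is that, since $|\zeta|^2=a^2+b^2=1$ (as $\zeta^n=1$ forces $|\zeta|=1$), multiplication by $\zeta$ permutes the unit circle, and $\varphi(x)=\textrm{Re}(\zeta z_x)$ records the real coordinate of a rotated point. I will first show that for each $x\in[-a,1]$ the product $\zeta z_x$ lies on the \emph{upper} unit semicircle. Modulus $1$ is automatic; the essential step is $\textrm{Im}(\zeta z_x)=bx+a\sqrt{1-x^2}\ge 0$, which, after squaring $a\sqrt{1-x^2}\ge -bx$, reduces in the nontrivial case $x<0$ to $|x|\le a$. This gives $\zeta z_x=z_{\varphi(x)}$, and in particular $\varphi(-a)=-1$ and $\varphi(1)=a$. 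The symmetric computation with $\zeta^{-1}=\overline{\zeta}=a-ib$ shows $\zeta^{-1}z_y=z_{\psi(y)}$ for every $y\in[-1,a]$. The trivial identities $\zeta\,\zeta^{-1}z_y=z_y$ and $\zeta^{-1}\zeta z_x=z_x$ then yield $\varphi\circ\psi=\textrm{id}$ and $\psi\circ\varphi=\textrm{id}$, so $\varphi$ and $\psi$ are mutually inverse bijections between $[-a,1]$ and $[-1,a]$.

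For strict monotonicity, I would argue directly on $\psi$ (monotonicity transfers to the inverse). For distinct $y_1,y_2\in[-1,a]$, rationalizing the square-root difference produces
$$\psi(y_2)-\psi(y_1) \;=\; (y_2-y_1)\left[a-\frac{b(y_1+y_2)}{\sqrt{1-y_1^2}+\sqrt{1-y_2^2}}\right].$$
If $y_1+y_2\le 0$ the bracket exceeds $a>0$. If $y_1+y_2>0$, one needs $a(\sqrt{1-y_1^2}+\sqrt{1-y_2^2})>b(y_1+y_2)$. The key observation is $|y|\le a\Rightarrow\sqrt{1-y^2}\ge b$ (from $1-y^2\ge 1-a^2=b^2$): when both $y_j\in[0,a]$, both square roots are $\ge b$ and $y_1+y_2\le 2a$, with strictness forced by $y_1\ne y_2$; when one $y_j$ is negative, dropping the corresponding nonpositive term $by_j$ on the right and using $by_{\max}\le ab\le a\sqrt{1-y_{\max}^2}$ closes the estimate. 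The main obstacle here is exactly this case analysis, since the calculus shortcut $\psi'(y)=a-by/\sqrt{1-y^2}>0$ is not available.

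For (B), the ``if'' direction is immediate: $1^n=1$, $(-1)^n=1$ (using $n$ even), $\zeta^n=1$, and $(-\overline\zeta)^n=(-1)^n\,\overline{\zeta^n}=1$ show that $z_{\pm 1}$ and $z_{\pm a}$ are $n$-th roots of unity. For the converse, assume $x\in[-1,-a]\cup[a,1]$ with $z_x^n=1$ and $x\ne\pm 1$; then $\textrm{Im}(z_x)>0$, so $z_x$ is eligible in the minimization defining $r$, and $|z_x-1|^2=2(1-x)\ge r^2=2(1-a)$ forces $x\le a$. If $x\in[a,1)$ this gives $x=a$. If $x\in(-1,-a]$, I apply the same reasoning to $z_{-x}=-\overline{z_x}$, which is itself an $n$-th root of unity (since $n$ is even) with positive imaginary part and $-x\in[a,1)$, concluding $-x=a$ and hence $x=-a$.
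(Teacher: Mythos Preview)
Your proof is correct and, for the bijection in (A) and all of (B), essentially identical to the paper's: both arguments rest on checking that $\textrm{Im}(\zeta z_x)\ge 0$ on $[-a,1]$ and $\textrm{Im}(\zeta^{-1}z_y)\ge 0$ on $[-1,a]$, so that $\zeta z_x=z_{\varphi(x)}$ and $\zeta^{-1}z_y=z_{\psi(y)}$, whence $\varphi$ and $\psi$ are mutual inverses; and (B) is in both cases the observation that $|z_x-1|^2=2-2x$ combined with the minimality defining $r$, plus the parity trick $z_{-x}=-\overline{z_x}$.

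The one genuine difference is the monotonicity argument. You rationalize $\psi(y_2)-\psi(y_1)$ and run a case analysis on the sign of $y_1+y_2$, using $|y|\le a\Rightarrow \sqrt{1-y^2}\ge b$ to bound the bracket. This works (your strictness in the ``both in $[0,a]$'' case follows from $a\sqrt{1-y^2}\ge by$ with equality only at $y=a$), but the paper avoids the case split entirely: it simply notes that $\varphi(x)=ax-b\sqrt{1-x^2}$ is \emph{obviously} increasing on $[0,1]$ and $\psi(y)=ay+b\sqrt{1-y^2}$ is obviously increasing on $[-1,0]$ (each being a sum of two increasing functions there), and since $\psi([-1,0])=[-a,b]$, the inverse $\varphi=\psi^{-1}$ is increasing on $[-a,b]$; the two pieces $[-a,b]$ and $[0,1]$ cover $[-a,1]$. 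That patching-via-the-inverse trick is shorter and sidesteps your algebraic estimates, while your argument has the virtue of being a single direct computation on $\psi$.
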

\begin{proof} Clearly, $-1\leq \textrm{Re}(\zeta z_x)$ and $a x-b\sqrt{1-x^2}\leq a$, for all $x$ in $[-a,1]$. We also have $\textrm{Re}(\zeta^{-1}z_y)\leq 1$ and $-a\leq ay + b\sqrt{1-y^2}$, for all $y$ in $[-1,a]$.
\begin{itemize}
\item[(A)] 
If $y$ is in $[-1,a]$, then Im$(\zeta^{-1}z_y)= a\sqrt{1 -y^2} -by$ is nonnegative on $[-1,0]$ and on $[0,a]$ (decreasing from $a$ to $0$ along $[0,a]$).
Thus, $x=\psi(y)=\textrm{Re}(\zeta^{-1}z_y)$ satisfies $z_x=\zeta^{-1}z_y$ and then
$\varphi(x)=\textrm{Re}(\zeta z_x)=y$. 

If $x$ is in $[-a,1]$, then $\textrm{Im}(\zeta z_x)=a\sqrt{1-x^2} + b x$ is nonnegative on $[-a,0]$ (increasing from $0$ to $a$) and on $[0,1]$. Thus, $y=\varphi(x)=\textrm{Re}(\zeta z_x)$ satisfies $z_y=\zeta z_x$. Hence,  $\psi(y)= \textrm{Re}(\zeta^{-1}z_y)=x$.

Evidently, $\varphi$ restricted to $[0,1]$ and $\psi$ restricted to $[-1,0]$ are increasing, with $\psi([-1,0])=[-a,b]$. Thus, $\varphi=\psi^{-1}$ restricted to $[-a,b]$ is increasing. So, the bijection $\varphi$ is strictly increasing on $[-a,1]$.

\item[(B)] If $x$ is in $(a,1)$, then we have $|z-1|^2= 2 - 2x< 2-2a =r^2$ and thus, by the definition of $r$, we get $z^n\neq 1$. If $x\in\{a,1\}$, it is obvious that $z^n=1$.

Because $n$ is even, given $x$ in $[-1,-a]$ and $z=x+i\sqrt{1 +x^2}$, it is enough to apply the last paragraph to $-x$ and $-x+i\sqrt{1-x^2}=-\overline{z}$. \qed
\end{itemize}
\end{proof}

\begin{thm} The number $\zeta$ is a primitive $n$th root of unity.
\end{thm}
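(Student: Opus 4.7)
The plan is to iterate ``multiplication by $\zeta$'' along the upper unit semicircle, combining both parts of the lemma. Set $x_0 = 1$ and $x_{k+1} = \varphi(x_k)$ while $x_k \in [-a, 1]$. Part (A) gives $\zeta \cdot z_{x_k} = z_{\varphi(x_k)}$ (since $\textrm{Im}(\zeta z_{x_k}) \geq 0$), so by induction $\zeta^k = z_{x_k}$; in particular $\zeta^1 = z_a = \zeta$, using $a^2 + b^2 = |\zeta|^2 = 1$. Monotonicity of $\varphi$ with $\varphi(1) = a < 1$ forces $x_0 > x_1 > x_2 > \cdots$ strictly, so the $\zeta^k$ are pairwise distinct $n$th roots of unity. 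Since there are only $n$ such roots, the iteration must exit $[-a, 1]$ at some first index $m$, landing at $x_m \in [-1, -a)$. But $z_{x_m} = \zeta^m$ is still an $n$th root of unity, so part (B) forces $x_m = -1$, i.e. $\zeta^m = -1$; inverting $\varphi(x_{m-1}) = -1$ via the formula $\psi(-1) = -a$ then gives $x_{m-1} = -a$, so $\zeta^{m-1} = -\overline{\zeta}$.

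The crux is to show that $\zeta^0, \zeta^1, \ldots, \zeta^m$ already exhausts every $n$th root of unity with nonnegative imaginary part. Suppose toward a contradiction that some $n$th root of unity $z_t$ with $t \in [-1,1]$ is missing. Part (B) forces $t \in (-a, a)$, so $t$ falls in some open gap $(x_{k+1}, x_k)$ between consecutive iterates, with $1 \leq k \leq m-2$. Iterating part (A) and using monotonicity of $\varphi$, the missing value propagates through successive gaps: $\varphi^j(t) \in (x_{k+1+j}, x_{k+j})$ for $j = 0, 1, \ldots, m-1-k$. The final value then lies in $(x_m, x_{m-1}) = (-1, -a)$, yet $z_{\varphi^{m-1-k}(t)} = \zeta^{m-1-k} \cdot z_t$ is still an $n$th root of unity, contradicting part (B). Hence the $m+1$ iterates are exactly the upper-semicircle $n$th roots of unity, which number $n/2 + 1$ (since $n$ is even and $\pm 1$ are included). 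So $m = n/2$.

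Finally, since $\zeta^m = -1$, the powers $\zeta^{m+1}, \ldots, \zeta^{2m-1}$ are the negatives of $\zeta, \ldots, \zeta^{m-1}$ and therefore fill out the open lower semicircle, while $\zeta^{2m} = 1$. Thus $\zeta, \zeta^2, \ldots, \zeta^n$ (with $n = 2m$) are the $n$ distinct $n$th roots of unity, so $\zeta$ is primitive. The principal obstacle is the ``no-gap'' argument of the middle paragraph; it is the only step that couples part (A) (to propagate a hypothetical missing root through the iteration) with part (B) (to rule out the final landing in $(-1, -a)$).
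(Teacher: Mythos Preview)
Your proof is correct and follows essentially the same route as the paper: iterate $\varphi$ from $x_0=1$, use monotonicity and finiteness of roots to obtain a strictly decreasing chain terminating at $-1$, and then argue that no $n$th root on the upper semicircle is skipped. The only cosmetic difference is in the no-gap step: the paper inducts interval by interval, pulling back via the bijection to the base case $[a,1]$ where Lemma~1(B) applies, whereas you push a hypothetical missing root forward through the chain until it lands in $(-1,-a)$ and invoke Lemma~1(B) there; these are the same argument run in opposite directions.
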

\begin{proof} 
Let us define $x_k=\varphi(x_{k-1})$, with $x_0=1$ and $k\geq 1$ such that $x_{k-1}$ is in $[-a,1]$, the domain of $\varphi$. By the lemma, $\varphi$ is strictly increasing and thus $x_2=\varphi(x_1)< x_1= \varphi(x_0)=a<x_0$. We also have $x_0=\textrm{Re}(\zeta^0)$, $x_1=\textrm{Re}(\zeta)$, and $x_2=\varphi(a)=\textrm{Re}(\zeta^2)$. Hence, by induction, we obtain $x_k=\textrm{Re}(\zeta^k)< x_{k-1}$.


\vspace{0,1 cm}

Since there are $n$ $n$th roots of unity, there exists the biggest $p$ in $\mathbb N$ satisfying
$$-1\leq x_p< x_{p-1}<\cdots <x_2<x_1< x_0=1.$$
Given $k=2,\ldots,p$, the function $\varphi$ is a bijection from $[x_{k-1},x_{k-2}]$ onto $[x_k,x_{k-1}]$. Hence, by induction on $k$ and by Lemma \ref{L}(B), there are only two values of $x$ in $[x_k,x_{k-1}]$ such that $(x + i\sqrt{1-x^2})^n=1$. Namely, $x=x_k$ and $x=x_{k-1}$. 

\vspace{0,2 cm}

Let us show that $x_p=-1$. If $x_p$ is in the domain of $\varphi$, then by defining $x_{p+1}=\varphi(x_p)$ we get $x_{p+1}=\varphi(x_p)<\varphi(x_{p-1})=x_p$, against the definition of $p$. Hence, $x_p$ is in $[-1,-a)$. From Lemma \ref{L}(B), we obtain $x_p=-1$ (and $\zeta^p=-1$).

\vspace{0,2 cm}

The subintervals $[x_k,x_{k-1})$, with $k=1,\ldots,p$, form a partition of $[-1,1)$ and to each subinterval corresponds only one $n$th root of unity, in the upper hemisphere $\{z\in \mathbb C: |z|=1\ \textrm{and}\ \textrm{Im}(z)\geq 0\}$. Hence, $\zeta^0,\zeta,\ldots,\zeta^p$ are all the distinct $n$th roots of unity in the upper hemisphere and $\zeta^0,\ldots, , \zeta^p, \overline{\zeta},\ldots, \overline{\zeta^{p-1}}$
are the $n$ $n$th roots of unity. Thus, $n=2p$. Finally, we have the identities $\zeta^{p+k}\zeta^{p-k}=\zeta^{2p}=1$ and  $\zeta^{p+k}=(\zeta^{-1})^{p-k}=\overline{\zeta^{p-k}}$, for all $k=1,\ldots, p-1$.\qed

\end{proof}

\section{Remarks.}

\begin{rmk} Lemma \ref{L}(a) can be proven in a shorter, but less revealing, way through differentiation. In fact, we start by noticing that $\varphi:[-a,1]\to [-1,a]$ is continuous, satisfies $\varphi(-a)= -1$ and $\varphi(1) =a$, and
$$\varphi'(x) = a +\frac{bx}{\sqrt{1-x^2}}=\frac{a\sqrt{1 - x^2} + bx}{\sqrt{1-x^2}},\ \textrm{for all}\ x \ \textrm{in}\ (-a,1).$$
Thus, given $x$ in $[0,1)$, we obtain $\varphi'(x)\geq a>0$. If $-a<x<0$, then we have $\sqrt{1 -x^2}>\sqrt{1-a^2}=b$ and $a\sqrt{1 - x^2} + bx>ab-ab=0$. Hence, $\varphi$ is strictly increasing and, by the intermediate-value theorem, its image is $[-1,a]$.
\end{rmk}

\begin{rmk} If $n$ is prime and $w$ is a $n$th root of unity, with $w\neq 1$, then a short computation reveals that $w$ is a primitive $n$th root of unity. 
\end{rmk}

\begin{rmk} Given $m$ in $\mathbb N$ and $\zeta$, a primitive $n$th root of unity, then it is not hard to see that $\zeta^m$ is a primitive $n$th root of unity if and only if $\gcd(m,n)=1$.
\end{rmk}

\begin{rmk} Let us consider $n$ in $\mathbb N$ and $\zeta$, a primitive $n$th root of unity. Then, given $c$ in $\mathbb C$, with $c\neq 0$, and $z$, an arbitrary $n$th root of $c$, it is straightforward to see that $\zeta^0z,\zeta^1z,\ldots, \zeta^{n-1}z$ are all the $n$ distinct  $n$th roots of $c$.
\end{rmk}

\begin{rmk} Using the complex exponential function, it is clear that $e^{i\frac{2\pi}{n}}$ is a primitive $n$th root of unity. A short computation shows that $e^{i\frac{2\pi}{n}}=\zeta$.
\end{rmk}
\paragraph{Acknowledgments.} I thank professors R. B. Burckel for references ~\cite{EA} and ~\cite{AP} and his very valuable comments and suggestions. I also thank Professor Lúcio M. G. Prado for a nice discussion on this topic.

\bigskip


\noindent\textit{Oswaldo Rio Branco de Oliveira\\
Departamento de Matemática, Universidade de São Paulo\\
Rua do Matão 1010 - CEP 05508-090\\
São Paulo, SP - Brasil\\
oliveira@ime.usp.br}

\bigskip


\begin{thebibliography}{9}

\bibitem{EA} E. Artin, \textit{Galois Theory}, Dover Publication, second edition (1944), Dover Publication, 1997.

\bibitem{RB} R. B. Burckel, A classical proof of the fundamental theorem of algebra dissected, \textit{Mathematics Newsletter of the Ramanujan Mathematical Society} \textbf{17} no. 2 (2007) 37--39.

\bibitem{AC} A. L. Cauchy, \textit{Cours D'Analyse de l'École Royale Polytechnique}, this edition first published 1821, Cambridge University Press, New York, 2009.

\bibitem{OO1} O. R. B. de Oliveira, The fundamental theorem of algebra: an elementary and direct proof, \emph{Math. Intelligencer} \textbf{33} no. 2 (2011) 1--2. 

\bibitem{OO2} O. R. B. de Oliveira, The fundamental theorem of algebra: from the four basic operations, \emph{Amer. Math. Monthly} \textbf{119} no. 9 (2012) 753--758.

\bibitem{TWK} T. W. Körner, On the fundamental theorem of algebra, \emph{Amer. Math. Monthly} \textbf{113} (2006) 347--348.

\bibitem{SL} S. Lang, \textit{Algebra}, third edition (revised), Springer, New York, 2002.

\bibitem{AP} A. Pringsheim, Vorlesungen über Zahlen- und Funktionenlehre.


\bibitem{RR} R. Remmert, The fundamental theorem of algebra, in \textit{Numbers}, Graduate Texts in Mathematics, no. 123, Edited by H.-D. Ebbinghaus, et al., Springer-Verlag, New York, 1991, 97--122.

\bibitem{RU} W. Rudin, \textit{Principles of Mathematical Analysis}, third edition. With a preface by W. Rudin. McGraw-Hill, Singapore, 1976.


\end{thebibliography}
\end{document}